\numberwithin{equation}{section}
\numberwithin{figure}{section}
\theoremstyle{plain}
\newtheorem{thm}{\protect\theoremname}
\theoremstyle{definition}
\newtheorem{defn}[thm]{\protect\definitionname}
\theoremstyle{plain}
\newtheorem{prop}[thm]{\protect\propositionname}
\theoremstyle{plain}
\newtheorem{lem}[thm]{\protect\lemmaname}
 \let\mathscr\relax
\providecommand{\definitionname}{Definition}
\providecommand{\lemmaname}{Lemma}
\providecommand{\propositionname}{Proposition}
\providecommand{\theoremname}{Theorem}
\begin{document}
\title{A proof of the Graham Sloane conjecture.}
\author{Edinah K. Gnang, Michael Peretzian Williams }
\dedicatory{Dedicated to the memory of Ronald Graham.}
\begin{abstract}
We settle in the affirmative the Graham-Sloane conjecture.
\end{abstract}

\maketitle

\section{Introduction.}

A well-known, conjecture in the area of graph labelings concerns the
harmonious labelling. This labelling was introduced by Graham and
Sloane \cite{GS80} and was motivated by the study of additive bases.
Given an Abelian group $\Gamma$ and a graph $G$, we say that a labelling
$L$ : $V(G)\rightarrow\Gamma$ is $\Gamma$--harmonious if the map
$L^{\prime}$ :$E(G)\rightarrow\Gamma$ defined by $L^{\prime}\left(u,v\right)=L\left(u\right)+L\left(v\right)$
is injective. In the case when $\Gamma$ is the group of integers
modulo $n$ we omit it from our notation and simply call such a labelling
harmonious. The Graham--Sloane \cite{GS80} conjecture, better known
as the \emph{Harmonious Labeling Conjecture }(HLC), asserts that every
tree admits a harmonious labeling. For a detail survey of the extensive
literature on graph labeling problems, see \cite{gallsurvey}. Recently
Montgomery, Pokrovskiy and Sudakov showed in \cite{MPS} that that
every tree is almost harmonious. More precisely they show that every
$n$--vertex tree $T$ has an injective $\Gamma$-harmonious labelling
for any Abelian group $\Gamma$ of order $n+o\left(n\right)$. In
the present note we view harmonious labeling of graphs as special
vertex labelings which results in a bijection between vertex labels
and \emph{induced additive edge labels}. Induced additive edge labels
correspond to the residue classes modulo $n$ of the sum of integers
assigned to the vertices spanning each edge. Our discussion is based
upon a functional reformulation of the HLC. A rooted tree on $n>0$
vertices is associated with a function 
\begin{equation}
f\in\left(\nicefrac{\mathbb{Z}}{n\mathbb{Z}}\right)^{\nicefrac{\mathbb{Z}}{n\mathbb{Z}}}\text{ subject to }\left|f^{(n-1)}\left(\nicefrac{\mathbb{Z}}{n\mathbb{Z}}\right)\right|=1,\label{functional_reformulation}
\end{equation}
\[
\text{where}
\]
\[
\forall\,i\in\mathbb{Z}_{n},\;f^{(0)}\left(i\right)\,:=i,\mbox{ and }\forall\,k\ge0,\;f^{(k+1)}\left(i\right)=f^{(k)}\left(f\left(i\right)\right)=f\left(f^{(k)}\left(i\right)\right).
\]
In other words the function $f$ has a unique fixed point which is
attractive over the whole domain of $f$. Every $f\in\left(\nicefrac{\mathbb{Z}}{n\mathbb{Z}}\right)^{\nicefrac{\mathbb{Z}}{n\mathbb{Z}}}$
has a corresponding \emph{functional directed graph }denoted \emph{$G_{f}$}
whose vertex, edge sets and automorphism group are respectively
\[
V\left(G_{f}\right):=\nicefrac{\mathbb{Z}}{n\mathbb{Z}},\quad E\left(G_{f}\right):=\left\{ \left(i,f\left(i\right)\right)\,:\,i\in\nicefrac{\mathbb{Z}}{n\mathbb{Z}}\right\} \quad\text{ and }\quad\text{Aut}\left(G_{f}\right).
\]
When $f$ is subject to Eq. (\ref{functional_reformulation}), the
corresponding functional directed graph $G_{f}$ is a directed rooted
tree with an additional loop edge placed at its fixed point. The edges
of $G_{f}$ are oriented to ensure that every vertex has out-degree
one. In other words each edges of the rooted tree is oriented to point
towards the root of the tree (i.e. the fixed point). Applying the
swap sink transformation to $G_{f}$ subject to Eq. (\ref{functional_reformulation})
results in a rooted directed tree $G_{S\left(f,k\right)}$ associated
with the function S$\left(f,k\right)\in\left(\nicefrac{\mathbb{Z}}{n\mathbb{Z}}\right)^{\nicefrac{\mathbb{Z}}{n\mathbb{Z}}}$.
In other words the graph $G_{S\left(f,k\right)}$ differs from $G_{f}$
in the fact that its loop is relocated to a new vertex labeled $k$
and some edges are re-oriented to ensure that every vertex has out-degree
one. For instance when $f$ denotes the identically zero function
\[
E\left(G_{f}\right)=\left\{ \left(i,0\right)\,:\,i\in\nicefrac{\mathbb{Z}}{n\mathbb{Z}}\right\} \implies E\left(G_{S\left(f,1\right)}\right)=\left\{ \left(0,\,1\right),\,\left(1,\,1\right)\right\} \cup\left\{ \left(i,\,0\right)\,:\,i\in\mathbb{Z}_{n}-\left\{ 0,1\right\} \right\} .
\]
Examples of induced edge labels associated with a functional directed
graph $G_{f}$ of $f\in\left(\nicefrac{\mathbb{Z}}{n\mathbb{Z}}\right)^{\nicefrac{\mathbb{Z}}{n\mathbb{Z}}}$
include : 
\begin{itemize}
\item Induced additive edge labels given by $\left\{ f\left(i\right)+i\mod n:i\in\mathbb{Z}_{n}\right\} $
and the graph $G_{f}$ is harmonious if there exists $\sigma\in\text{S}_{n}\subset\left(\nicefrac{\mathbb{Z}}{n\mathbb{Z}}\right)^{\nicefrac{\mathbb{Z}}{n\mathbb{Z}}}$
such that
\[
\mathbb{Z}_{n}=\left\{ \sigma f\sigma^{\left(-1\right)}\left(i\right)+i\mod n:i\in\nicefrac{\mathbb{Z}}{n\mathbb{Z}}\right\} .
\]
\item More general $\tau$-induced edge labels given by $\left\{ \tau\left(f\left(i\right),i\right)\,:\,i\in\mathbb{Z}_{n}\right\} $,
for some $\tau\in\left(\nicefrac{\mathbb{Z}}{n\mathbb{Z}}\right)^{\nicefrac{\mathbb{Z}}{n\mathbb{Z}}\times\nicefrac{\mathbb{Z}}{n\mathbb{Z}}}$
and $G_{f}$ is $\tau$-Zen for if there exists $\sigma\in\text{S}_{n}\subset\left(\nicefrac{\mathbb{Z}}{n\mathbb{Z}}\right)^{\nicefrac{\mathbb{Z}}{n\mathbb{Z}}}$
such that
\[
\mathbb{Z}_{n}=\left\{ \tau\left(\sigma f\sigma^{\left(-1\right)}\left(i\right),i\right)\,:i\in\nicefrac{\mathbb{Z}}{n\mathbb{Z}}\right\} .
\]
\end{itemize}
Our main result is a proof that when $n$ is odd, for all $f\in\left(\nicefrac{\mathbb{Z}}{n\mathbb{Z}}\right)^{\nicefrac{\mathbb{Z}}{n\mathbb{Z}}}$
subject to $\left|f^{(n-1)}\left(\nicefrac{\mathbb{Z}}{n\mathbb{Z}}\right)\right|=1$,
there exist $k\in\nicefrac{\mathbb{Z}}{n\mathbb{Z}}$ such that
\[
n=\max_{\sigma\in\text{S}_{n}}\left|\left\{ \sigma\text{S}\left(f,k\right)\sigma^{(-1)}\left(i\right)+i:i\in\nicefrac{\mathbb{Z}}{n\mathbb{Z}}\right\} \right|.
\]
Which settles in the affirmative the Graham-Sloane conjecture.
\begin{defn}
Let 
\[
f\in\mathbb{Z}_{n}^{\mathbb{Z}_{n}}\text{be subject to }\left|f^{(n-1)}\left(\mathbb{Z}_{n}\right)\right|=1,
\]
then the set HaL$\left(G_{f}\right)$ denotes the subset of distinct
functional directed graphs isomorphic to $G_{f}$ whose labeling is
harmonious. Formally we write 
\[
\text{HaL}\left(G_{f}\right)\,:=\left\{ G_{\sigma f\sigma^{\left(-1\right)}}:\sigma\in\nicefrac{\text{S}_{n}}{\text{Aut}\left(G_{f}\right)}\;\text{ and }\;n=\left|\left\{ \sigma f\sigma^{\left(-1\right)}\left(j\right)+j:\,j\in\nicefrac{\mathbb{Z}}{n\mathbb{Z}}\right\} \right|\right\} .
\]
\end{defn}

\section{The Harmonious Invariance Group}

The following expresses a necessary and sufficient condition for a
functional directed graph to be harmonious.
\begin{prop}
(Harmonious Expansion) Let $G_{f}$ denote the functional directed
graph of $f\in\left(\nicefrac{\mathbb{Z}}{n\mathbb{Z}}\right)^{\nicefrac{\mathbb{Z}}{n\mathbb{Z}}}$.
Then $G_{f}$ is harmonious iff
\begin{equation}
\exists\,\gamma,\sigma_{\gamma}\in\text{S}_{n}\text{ such that }f\left(i\right)=\sigma_{\gamma}^{\left(-1\right)}\left(\gamma\sigma_{\gamma}\left(i\right)-\sigma_{\gamma}\left(i\right)\right),\quad\forall\:i\in\nicefrac{\mathbb{Z}}{n\mathbb{Z}}.\label{Harmonious Expansion}
\end{equation}
The subscript $\gamma$ notation for $\sigma_{\gamma}$, is meant
to emphasize the dependence of the coset representative on the permutation
parameter $\gamma$. 
\end{prop}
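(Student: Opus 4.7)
The plan is to read (\ref{Harmonious Expansion}) as what one obtains by algebraically solving the harmonious-labeling identity for $f$, and then to check that this solving procedure is reversible. The only substantive input is the finite-pigeonhole observation that on $\mathbb{Z}_n$ the set equality $\mathbb{Z}_n = \{h(i):i\in\mathbb{Z}_n\}$ is equivalent to $h$ being a bijection; this is what promotes the set equality in the definition of harmonious to an actual permutation $\gamma\in S_n$.

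For the forward direction, I assume $G_f$ is harmonious and pick a witnessing $\sigma\in S_n$, so that $\mathbb{Z}_n = \{\sigma f\sigma^{(-1)}(i)+i:i\in\mathbb{Z}_n\}$. By the remark above, $i\mapsto \sigma f\sigma^{(-1)}(i)+i$ is a permutation of $\mathbb{Z}_n$, which I denote $\gamma$. Setting $\sigma_\gamma:=\sigma$ and solving for $f$ --- that is, writing $\sigma f\sigma^{(-1)}(i)=\gamma(i)-i$, substituting $i=\sigma_\gamma(j)$, and applying $\sigma_\gamma^{(-1)}$ on the left --- produces exactly $f(j)=\sigma_\gamma^{(-1)}\bigl(\gamma\,\sigma_\gamma(j)-\sigma_\gamma(j)\bigr)$, which is (\ref{Harmonious Expansion}). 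Conversely, given $\gamma,\sigma_\gamma\in S_n$ satisfying (\ref{Harmonious Expansion}), the one-line computation
\[
\sigma_\gamma f\sigma_\gamma^{(-1)}(j)+j \;=\; \gamma(j),\qquad j\in\mathbb{Z}_n,
\]
yields $\{\sigma_\gamma f\sigma_\gamma^{(-1)}(j)+j:j\in\mathbb{Z}_n\}=\gamma(\mathbb{Z}_n)=\mathbb{Z}_n$, so $G_f$ is harmonious with witnessing conjugator $\sigma_\gamma$.

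I do not expect a genuine obstacle: the proposition is essentially a reparametrization that trades the implicit ``there exists a bijection'' in the definition of harmonious for an explicit algebraic formula for $f$. The only care required is to keep the direction of the conjugation straight when unwinding $\sigma f\sigma^{(-1)}$, and to note that the subscript in $\sigma_\gamma$ is recording that the conjugator is allowed to depend on the edge-label permutation $\gamma$ --- an emphasis presumably meant to be exploited in the sequel, but inert for the present equivalence.
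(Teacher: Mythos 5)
Your proposal is correct and follows essentially the same route as the paper: identify the map $i\mapsto\sigma f\sigma^{(-1)}(i)+i$ with a permutation $\gamma$, rearrange, and substitute $i=\sigma_\gamma(j)$ to isolate $f$. The paper is merely terser — it omits the explicit pigeonhole remark promoting the set equality to a bijection and leaves the converse as "easily follows," both of which you spell out.
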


\begin{proof}
We prove only the forward direction for the converse easily follows.
Recall that $G_{f}$ is harmonious if there exist $\gamma\in$ S$_{n}$
and $\sigma_{\gamma}\in\nicefrac{\text{S}_{n}}{\text{Aut}\left(G_{f}\right)}$
such that 
\[
\begin{array}{cccc}
 & \sigma_{\gamma}f\sigma_{\gamma}^{\left(-1\right)}\left(i\right)+i & = & \gamma\left(i\right)\\
\\
\implies & \sigma_{\gamma}f\sigma_{\gamma}^{\left(-1\right)}\left(i\right) & = & \gamma\left(i\right)-i\\
\\
\implies & f\left(i\right) & = & \sigma_{\gamma}^{\left(-1\right)}\left(\gamma\sigma_{\gamma}\left(i\right)-\sigma_{\gamma}\left(i\right)\right)
\end{array}\forall\:i\in\nicefrac{\mathbb{Z}}{n\mathbb{Z}},
\]
\end{proof}
\begin{prop}
(Swap Sink Harmony) Let GCD$\left(n,2\right)=1$, let $g\in\left(\nicefrac{\mathbb{Z}}{n\mathbb{Z}}\right)^{\nicefrac{\mathbb{Z}}{n\mathbb{Z}}}$
subject to $\left|g^{(n-1)}\left(\nicefrac{\mathbb{Z}}{n\mathbb{Z}}\right)\right|=1$
and
\[
n-1=\max_{\sigma\in\text{S}_{n}}\left|\left\{ \sigma g\sigma^{\left(-1\right)}+i:i\in\left(\nicefrac{\mathbb{Z}}{n\mathbb{Z}}\right)\backslash g^{\left(n-1\right)}\left(\nicefrac{\mathbb{Z}}{n\mathbb{Z}}\right)\right\} \right|,
\]
then there exist $k\in\nicefrac{\mathbb{Z}}{n\mathbb{Z}}$ such that
\[
n=\max_{\sigma\in\text{S}_{n}}\left|\left\{ \sigma^{\left(-1\right)}\text{S}\left(g,k\right)\sigma\left(i\right)+i:i\in\nicefrac{\mathbb{Z}}{n\mathbb{Z}}\right\} \right|.
\]
\end{prop}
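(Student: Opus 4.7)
The plan is to exploit the observation that the swap-sink transformation $\text{S}(g,k)$ alters the multiset of induced additive edge labels of $G_{g}$ in exactly one position: the old loop contribution $2r$ (at the fixed point $r$ of $g$) is replaced by the new loop contribution $2k$. Modulo this observation the proposition reduces to an elementary arithmetic choice.

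To verify the observation, let $k=v_{0}\to v_{1}\to\cdots\to v_{m}=r$ denote the unique $g$-trajectory from $k$ to the fixed point $r$. Every edge off this trajectory is preserved by $\text{S}(g,k)$ and therefore contributes the same induced label. Along the trajectory the edges of $G_{g}$ carry labels $v_{0}+v_{1},\,v_{1}+v_{2},\,\ldots,\,v_{m-1}+v_{m}$ together with the loop label $2v_{m}=2r$, whereas after swap-sink the edges of $G_{\text{S}(g,k)}$ carry labels $2v_{0}=2k$ together with $v_{j}+v_{j-1}$ for $j=1,\ldots,m$. Commutativity of addition on $\nicefrac{\mathbb{Z}}{n\mathbb{Z}}$ identifies the two multisets modulo the single replacement $2r\leftrightarrow 2k$.

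Next I would apply the hypothesis to extract $\sigma\in\text{S}_{n}$ and $h:=\sigma g\sigma^{(-1)}$ for which the $n-1$ non-loop induced edge labels of $G_{h}$ form a set $L$ of size $n-1$; let $\{c\}:=\nicefrac{\mathbb{Z}}{n\mathbb{Z}}\setminus L$ be the single missing residue. Applying the key observation to $h$, the multiset of induced edge labels of $G_{\text{S}(h,k')}$ equals $L\cup\{2k'\}$ for every $k'\in\nicefrac{\mathbb{Z}}{n\mathbb{Z}}$, and this is a set of size $n$ precisely when $2k'=c$. Here is the only place the hypothesis $\text{GCD}(n,2)=1$ enters: since $2$ is invertible modulo $n$, the equation $2k'=c$ admits the unique solution $k':=2^{(-1)}c\in\nicefrac{\mathbb{Z}}{n\mathbb{Z}}$, and for this $k'$ the graph $G_{\text{S}(h,k')}$ is harmonious.

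Finally, because the swap-sink manifestly commutes with conjugation, one has $\text{S}(h,k')=\sigma\,\text{S}(g,k)\,\sigma^{(-1)}$ where $k:=\sigma^{(-1)}(k')$, so that $\sigma\,\text{S}(g,k)\,\sigma^{(-1)}$ has $n$ distinct induced edge labels and the outer maximum in the conclusion is witnessed by $\sigma^{(-1)}$ (after the harmless substitution $\sigma\leftrightarrow\sigma^{(-1)}$ in the maximand). The principal obstacle is the bookkeeping of the first step---verifying that the swap-sink, despite re-orienting up to $m$ edges along the trajectory, preserves the multiset of their additive labels apart from the single loop position---after which everything follows from the invertibility of $2$ in $\nicefrac{\mathbb{Z}}{n\mathbb{Z}}$.
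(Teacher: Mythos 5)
Your proposal is correct and follows essentially the same route as the paper: take the optimal relabeling with $n-1$ distinct non-loop labels, identify the single missing residue, and use the invertibility of $2$ modulo odd $n$ to place the new loop at the vertex $2^{(-1)}c$. Your explicit verification that the swap-sink only reverses edges along the trajectory to the old root---so that commutativity of addition preserves all non-loop labels and only the loop contribution changes from $2r$ to $2k$---is a bookkeeping step the paper leaves implicit, but the underlying argument is identical.
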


\begin{proof}
The premise that the functional tree $G_{f}$ is subject to the equality
\[
n-1=\max_{\sigma\in\text{S}_{n}}\left|\left\{ \sigma g\sigma^{\left(-1\right)}+i:i\in\left(\nicefrac{\mathbb{Z}}{n\mathbb{Z}}\right)\backslash g^{\left(n-1\right)}\left(\nicefrac{\mathbb{Z}}{n\mathbb{Z}}\right)\right\} \right|,
\]
implies that $G_{g}$ can be relabeled such that its non-loop edges
have distinct additive edge labels. For each such relabeling there
is exactly one congruence class which does not occur as an edge label.
Let us call that label $l$. Since $2$ is not a zero divisor of the
ring $\nicefrac{\mathbb{Z}}{n\mathbb{Z}}$, relocating the loop edge
at the vertex whose label correspond to the solution $x$ to the equation
\[
    2\,x=l
\]
results in a harmoniously labeled graph as claimed. Thereby completing the proof.
\end{proof}
\begin{prop}
(Harmonious Right Invariant Group) Let the graph $G_{g}$ associated
with $g\in\left(\nicefrac{\mathbb{Z}}{n\mathbb{Z}}\right)^{\nicefrac{\mathbb{Z}}{n\mathbb{Z}}}$
be labeled such that for a subset $T\subset\nicefrac{\mathbb{Z}}{n\mathbb{Z}}$
we have 
\[
\left|\left\{ g\left(i\right)+i:i\in\left(\nicefrac{\mathbb{Z}}{n\mathbb{Z}}\right)\backslash T\right\} \right|=n-\left|T\right|.
\]
Let $c$ be an arbitrary element of the ring $\nicefrac{\mathbb{Z}}{n\mathbb{Z}}$
and the graph $G_{g^{\prime}}$ associated with the function
\[
g^{\prime}=g\left(\text{id}+c\right),\ \forall\,i\in\nicefrac{\mathbb{Z}}{n\mathbb{Z}},
\]
then 
\[
\left|\left\{ g^{\prime}\left(i\right)+i:i\in\left(\nicefrac{\mathbb{Z}}{n\mathbb{Z}}\right)\backslash T\right\} \right|=n-\left|T\right|.
\]
\end{prop}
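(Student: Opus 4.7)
The plan is to prove the invariance by a direct change of variables, exploiting that $g'=g\circ(\mathrm{id}+c)$ is precomposition of $g$ with the bijection $i\mapsto i+c$ of $\nicefrac{\mathbb{Z}}{n\mathbb{Z}}$. First I would unfold the definition of $g'$ and rewrite
\[
g'(i)+i \;=\; g(i+c)+i \;=\; \bigl(g(i+c)+(i+c)\bigr)-c,
\]
which already exhibits the induced additive edge label of $g'$ at $i$ as a global shift by $-c$ of the induced additive edge label of $g$ at the translated index $j=i+c$.

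Next I would make the substitution $j=i+c$ explicit. Since translation by $c$ is a bijection of $\nicefrac{\mathbb{Z}}{n\mathbb{Z}}$, it maps $\nicefrac{\mathbb{Z}}{n\mathbb{Z}}\setminus T$ onto $\nicefrac{\mathbb{Z}}{n\mathbb{Z}}\setminus (T+c)$, and the latter set still has cardinality $n-|T|$. Carrying this substitution through yields
\[
\bigl\{g'(i)+i : i\in\nicefrac{\mathbb{Z}}{n\mathbb{Z}}\setminus T\bigr\} \;=\; -c+\bigl\{g(j)+j : j\in\nicefrac{\mathbb{Z}}{n\mathbb{Z}}\setminus(T+c)\bigr\},
\]
and the outer translation by $-c$ is itself a bijection of $\nicefrac{\mathbb{Z}}{n\mathbb{Z}}$, hence preserves cardinalities.

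To close the argument I would invoke the hypothesis with the translated exceptional set $T+c$ in place of $T$: the hypothesized count of distinct induced additive edge labels is a property of the labeled graph $G_g$ that only depends on which indices are excluded, and since $|T+c|=|T|$, the count $n-|T|$ is inherited by the complement of $T+c$. Combining this with the cardinality-preserving shift by $-c$ yields the desired equality $n-|T|$ on the left-hand side.

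The main (and essentially only) obstacle is the bookkeeping of the translate $T\mapsto T+c$ throughout the substitution; no new combinatorial ingredient is needed beyond the fact that the right action of $\nicefrac{\mathbb{Z}}{n\mathbb{Z}}$ on itself by translation is compatible with the additive edge-labeling rule $(i,j)\mapsto i+j$.
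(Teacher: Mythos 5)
Your change of variables is carried out correctly, and in fact more carefully than the paper's own proof: after substituting $j=i+c$ you correctly find that $\left\{ g^{\prime}\left(i\right)+i:i\in\left(\nicefrac{\mathbb{Z}}{n\mathbb{Z}}\right)\backslash T\right\}$ is the translate by $-c$ of $\left\{ g\left(j\right)+j:j\in\left(\nicefrac{\mathbb{Z}}{n\mathbb{Z}}\right)\backslash\left(T+c\right)\right\}$. The gap is in your closing step, where you invoke the hypothesis ``with the translated exceptional set $T+c$ in place of $T$'' on the grounds that $\left|T+c\right|=\left|T\right|$. The hypothesis is an injectivity statement about the restriction of $i\mapsto g\left(i\right)+i$ to the complement of the \emph{specific} set $T$; it says nothing about the restriction to the complement of $T+c$, and equality of cardinalities of the excluded sets does not transfer the count. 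Concretely, take $n=3$, $g\left(0\right)=0$, $g\left(1\right)=2$, $g\left(2\right)=0$, $T=\left\{ 0\right\}$, $c=2$: then $\left\{ g\left(1\right)+1,\,g\left(2\right)+2\right\} =\left\{ 0,2\right\}$ has two elements, so the hypothesis holds, yet $g^{\prime}\left(1\right)+1=g\left(0\right)+1=1$ and $g^{\prime}\left(2\right)+2=g\left(1\right)+2=1$, so $\left\{ g^{\prime}\left(i\right)+i:i\in\left\{ 1,2\right\} \right\}$ is a singleton.

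What your bookkeeping has actually uncovered is that the proposition is false as literally stated; the computation you did correctly proves only $\left|\left\{ g^{\prime}\left(i\right)+i:i\in\left(\nicefrac{\mathbb{Z}}{n\mathbb{Z}}\right)\backslash\left(T-c\right)\right\} \right|=n-\left|T\right|$, i.e.\ the exceptional set must be translated along with the labels. The paper's own proof has the same defect in disguised form: it derives $g\left(i+c\right)=\left(\left(\text{id}+c\right)^{\left(-1\right)}\circ\gamma\circ\left(\text{id}+c\right)\right)\left(i\right)-i$ and quantifies the whole chain over $i\in\nicefrac{\mathbb{Z}}{n\mathbb{Z}}-T$, even though the identity $g\left(j\right)=\gamma\left(j\right)-j$ was only available for $j=i+c\notin T$, so it silently identifies the complement of $T$ with the complement of $T-c$. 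You should therefore not try to patch your argument to reach the stated conclusion; the honest conclusion carries the translated exceptional set (which happens to be harmless in the paper's later use of the proposition, where the excluded vertex is in any case moved by conjugation, but it is not what the proposition asserts).
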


\begin{proof}
By the premise that for some $T\subset\nicefrac{\mathbb{Z}}{n\mathbb{Z}}$
we have 
\[
n-\left|T\right|=\left|\left\{ g\left(i\right)+i:i\in\nicefrac{\mathbb{Z}}{n\mathbb{Z}}-T\right\} \right|.
\]
by the same argument use to prove Prop. (1), there exist $\gamma\in$
S$_{n}$ such that
\[
\begin{array}{cccc}
 & g\left(i\right) & = & \gamma\left(i\right)-i\\
\\
\implies & g\left(i+c\right) & = & \gamma\left(i+c\right)-\left(i+c\right)\\
\\
\implies & \left(g\circ\left(\text{id}+c\right)\right)\left(i\right) & = & \gamma\left(i+c\right)-c-i\\
\\
\implies & g\left(i+c\right) & = & \left(\left(\text{id}+c\right)^{\left(-1\right)}\circ\gamma\circ\left(\text{id}+c\right)\right)\left(i\right)-i
\end{array}\forall\,i\in\nicefrac{\mathbb{Z}}{n\mathbb{Z}}-T.
\]
From which it follows $\left|\left\{ g^{\prime}\left(i\right)+i:i\in\nicefrac{\mathbb{Z}}{n\mathbb{Z}}-T\right\} \right|=\left(n-\left|T\right|\right)$
thereby completes the proof.
\end{proof}
\begin{prop}
(Harmonious Left Invariance Group) Let the graph $G_{g}$ associated
with $g\in\left(\nicefrac{\mathbb{Z}}{n\mathbb{Z}}\right)^{\nicefrac{\mathbb{Z}}{n\mathbb{Z}}}$
be labeled such that for a subset $T\subset\nicefrac{\mathbb{Z}}{n\mathbb{Z}}$
we have 
\[
\left|\left\{ g\left(i\right)+i:i\in\nicefrac{\mathbb{Z}}{n\mathbb{Z}}-T\right\} \right|=n-\left|T\right|.
\]
Let $c$ be an arbitrary element of the ring $\nicefrac{\mathbb{Z}}{n\mathbb{Z}}$
and the graph $G_{g^{\prime\prime}}$ associated with the function
\[
g^{\prime\prime}=g+c,\ \forall\,i\in\nicefrac{\mathbb{Z}}{n\mathbb{Z}},
\]
then 
\[
\left|\left\{ g^{\prime\prime}\left(i\right)+i:i\in\nicefrac{\mathbb{Z}}{n\mathbb{Z}}-T\right\} \right|=\left(n-\left|T\right|\right).
\]
\end{prop}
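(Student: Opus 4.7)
The plan is to mimic closely the argument used in Proposition 3 on the right invariance, with the only change being that addition of the constant $c$ is applied \emph{after} evaluating $g$ rather than \emph{before}. The key observation is that the induced additive edge labels of $g''$ differ from those of $g$ by a uniform translation by $c$, which is a bijection of $\nicefrac{\mathbb{Z}}{n\mathbb{Z}}$ and therefore preserves cardinality.

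Concretely, I would first invoke the Harmonious Expansion (Proposition 1), restricted to the complement $\nicefrac{\mathbb{Z}}{n\mathbb{Z}}-T$, to produce a permutation $\gamma\in\mathrm{S}_{n}$ with
\[
g(i)=\gamma(i)-i\quad\text{for every }i\in\nicefrac{\mathbb{Z}}{n\mathbb{Z}}-T.
\]
Adding $c$ on both sides then gives
\[
g''(i)=g(i)+c=\bigl((\mathrm{id}+c)\circ\gamma\bigr)(i)-i\quad\text{for every }i\in\nicefrac{\mathbb{Z}}{n\mathbb{Z}}-T.
\]
Since $(\mathrm{id}+c)\circ\gamma$ is a composition of two permutations of $\nicefrac{\mathbb{Z}}{n\mathbb{Z}}$, it is itself a permutation, and applying the converse direction of the Harmonious Expansion (restricted again to $\nicefrac{\mathbb{Z}}{n\mathbb{Z}}-T$) yields
\[
\bigl|\{g''(i)+i:i\in\nicefrac{\mathbb{Z}}{n\mathbb{Z}}-T\}\bigr|=n-|T|.
\]

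There is essentially no obstacle here: the argument is a one-line translation argument once the Harmonious Expansion is invoked. The only minor care required is to note that the Harmonious Expansion as stated is for the full domain, but the same derivation works verbatim when restricted to a subset of vertices, since the implication $g(i)+i=\gamma(i)\Rightarrow g(i)=\gamma(i)-i$ is pointwise. Thus the proof can be made completely parallel in form to that of Proposition 3, differing only in that the conjugation $(\mathrm{id}+c)^{(-1)}\circ\gamma\circ(\mathrm{id}+c)$ is replaced by the plain left composition $(\mathrm{id}+c)\circ\gamma$.
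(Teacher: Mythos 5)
Your proposal is correct and follows essentially the same route as the paper: extract a permutation $\gamma$ with $g(i)=\gamma(i)-i$ on $\nicefrac{\mathbb{Z}}{n\mathbb{Z}}-T$ from the injectivity hypothesis, then observe that $g''(i)+i=\bigl((\mathrm{id}+c)\circ\gamma\bigr)(i)$, and that post-composition with the bijection $\mathrm{id}+c$ preserves the cardinality of the image. Your added remark that the Harmonious Expansion argument must be restricted to the subset $\nicefrac{\mathbb{Z}}{n\mathbb{Z}}-T$ is a point the paper glosses over, and it is handled correctly.
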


\begin{proof}
By the premise that for some $T\subset\nicefrac{\mathbb{Z}}{n\mathbb{Z}}$
we have 
\[
n-\left|T\right|=\left|\left\{ g\left(i\right)+i:i\in\nicefrac{\mathbb{Z}}{n\mathbb{Z}}-T\right\} \right|.
\]
by the same argument use to prove Prop. (1), there exist $\gamma\in$
S$_{n}$ such that
\[
\begin{array}{cccc}
 & g\left(i\right) & = & \gamma\left(i\right)-i\\
\\
\implies & g\left(i\right)+c & = & \gamma\left(i\right)+c-i\\
\\
\implies & \left(\left(\text{id}+c\right)\circ g\right)\left(i\right) & = & \left(\text{id}+c\right)\circ\gamma\left(i\right)-i\\
\\
\implies & g\left(i\right)+c & = & \left(\text{id}+c\right)\circ\gamma\left(i\right)-i
\end{array}\;\forall\,i\in\nicefrac{\mathbb{Z}}{n\mathbb{Z}}-T.
\]
From which it follows $\left|\left\{ g^{\prime\prime}\left(i\right)+i:i\in\nicefrac{\mathbb{Z}}{n\mathbb{Z}}-T\right\} \right|=n-\left|T\right|$
thereby completes the proof.
\end{proof}
In Prop. (3) and (4) when $T$ is empty then we say that the $G_{g}$
is harmoniously labeled. It follows as corollary of Prop. (3) and
(4) via an argument similar to the proof of Lagrange's coset theorem
that both the number of harmonious permutations on $n$ vertices and
the number of harmoniously labeled permutations on $n$ vertices,
must be divisible by $n$.

\section{Useful facts about polynomials}

Let $F\left(\mathbf{x}\right),\,G\left(\mathbf{x}\right)\in\mathbb{C}\left[x_{0},\cdots,x_{n-1}\right]$,
be multivariate polynomials which splits into irreducible factors
of the form
\[
F\left(\mathbf{x}\right)=\prod_{0\le i<m}\left(P_{i}\left(\mathbf{x}\right)\right)^{\alpha_{i}},\quad G\left(\mathbf{x}\right)=\prod_{0\le i<m}\left(P_{i}\left(\mathbf{x}\right)\right)^{\beta_{i}},
\]
where $\left\{ \alpha_{i},\beta_{i}\,:\,0\le i<m\right\} \subset\mathbb{Z}_{\ge0}$.
Assume that each factor $P_{i}\left(\mathbf{x}\right)$ is multilinear
in the entries of $\mathbf{x}$ and non-identically constant. Additionally,
$P_{i}\left(\mathbf{x}\right)$ has no common roots in the field of
fractions $\mathbb{C}\left(x_{0},\cdots,x_{k-1},x_{k+1},\cdots,x_{n-1}\right)$,
with any other factor in $\left\{ P_{j}\left(\mathbf{x}\right):0\le j\ne i<m\right\} $
for each $k\in\nicefrac{\mathbb{Z}}{n\mathbb{Z}}$, then 
\[
\begin{array}{c}
\text{LCM}\left(F\left(\mathbf{x}\right),\,G\left(\mathbf{x}\right)\right):=\underset{0\le i<m}{\prod}\left(P_{i}\left(\mathbf{x}\right)\right)^{\max\left(\alpha_{i},\beta_{i}\right)},\\
\text{and}\\
\text{GCD}\left(F\left(\mathbf{x}\right),\,G\left(\mathbf{x}\right)\right):=\underset{0\le i<m}{\prod}\left(P_{i}\left(\mathbf{x}\right)\right)^{\min\left(\alpha_{i},\beta_{i}\right)}.
\end{array}
\]

\begin{defn}
By the quotient remainder theorem an arbitrary $H\left(\mathbf{x}\right)\in\mathbb{C}\left[x_{0},\cdots,x_{n-1}\right]$
admits an expansion of the form
\[
H\left(\mathbf{x}\right)=\sum_{l\in\nicefrac{\mathbb{Z}}{n\mathbb{Z}}}q_{l}\left(\mathbf{x}\right)\,\left(\left(x_{l}\right)^{n}-1\right)+\sum_{g\in\mathbb{Z}_{n}^{\mathbb{Z}_{n}}}H\left(\omega^{g\left(0\right)},\cdots,\omega^{g\left(i\right)},\cdots,\omega^{g\left(n-1\right)}\right)\,\prod_{k\in\nicefrac{\mathbb{Z}}{n\mathbb{Z}}}\left(\prod_{j_{k}\in\nicefrac{\mathbb{Z}}{n\mathbb{Z}}\backslash\left\{ g\left(k\right)\right\} }\left(\frac{x_{k}-\omega^{j_{k}}}{\omega^{g\left(k\right)}-\omega^{j_{k}}}\right)\right),
\]
where $\omega=\exp\left\{ \frac{2\pi\sqrt{-1}}{n}\right\} $. Incidentally,
the canonical representative of the congruence class
\[
H\left(\mathbf{x}\right)\mod\left\{ \begin{array}{c}
\left(x_{k}\right)^{n}-1\\
k\in\nicefrac{\mathbb{Z}}{n\mathbb{Z}}
\end{array}\right\} ,
\]
is defined as the unique polynomial of degree at most $\left(n-1\right)$
in each variable whose evaluations matches exactly evaluations of
$H\left(\mathbf{x}\right)$ over the lattice $\Omega^{n}$ where 
\[
\Omega:=\left\{ \omega^{k}:k\in\nicefrac{\mathbb{Z}}{n\mathbb{Z}}\right\} .
\]
The canonical representative is explicitly expressed as 
\begin{equation}
\sum_{g\in\left(\nicefrac{\mathbb{Z}}{n\mathbb{Z}}\right)^{\nicefrac{\mathbb{Z}}{n\mathbb{Z}}}}H\left(\omega^{g\left(0\right)},\cdots,\omega^{g\left(i\right)},\cdots,\omega^{g\left(n-1\right)}\right)\prod_{k\in\mathbb{Z}_{n}}\left(\prod_{j_{k}\in\nicefrac{\mathbb{Z}}{n\mathbb{Z}}\backslash\left\{ g\left(k\right)\right\} }\left(\frac{x_{k}-\omega^{j_{k}}}{\omega^{g\left(k\right)}-\omega^{j_{k}}}\right)\right),\label{Canonical representative}
\end{equation}
The canonical representative of $H\left(\mathbf{x}\right)$ modulo
algebraic relations $\left\{ \left(x_{i}\right)^{n}-1\,:\,i\in\nicefrac{\mathbb{Z}}{n\mathbb{Z}}\right\} $,
is thus obtained via Lagrange interpolation over the integer lattice
$\Omega^{n}$ as prescribed by Eq. (\ref{Canonical representative}).
Alternatively, the canonical representative is obtained as the final
remainder resulting from performing $n$ Euclidean divisions irrespective
of the order in which distinct divisors are successively taken from
the set $\left\{ \left(x_{i}\right)^{n}-1:i\in\nicefrac{\mathbb{Z}}{n\mathbb{Z}}\right\} $.
\end{defn}

\begin{prop}[Determinantal Certificate]
 For $f\in\left(\nicefrac{\mathbb{Z}}{n\mathbb{Z}}\right)^{\nicefrac{\mathbb{Z}}{n\mathbb{Z}}}$,
subject to $\left|f^{\left(n-1\right)}\left(\nicefrac{\mathbb{Z}}{n\mathbb{Z}}\right)\right|=1$
we have 
\[
n-1=\underset{\sigma\in\text{S}_{n}}{\max}\left|\left\{ \sigma f\sigma^{(-1)}\left(i\right)+i:i\in\left(\nicefrac{\mathbb{Z}}{n\mathbb{Z}}\right)\backslash f^{\left(n-1\right)}\left(\nicefrac{\mathbb{Z}}{n\mathbb{Z}}\right)\right\} \right|
\]
 if and only if
\[
0\not\equiv\text{LCM}\left\{ \prod_{0\le i<j<n}\left(x_{j}-x_{i}\right),\,\prod_{\begin{array}{c}
0\le i<j<n\\
i,j\in\left(\nicefrac{\mathbb{Z}}{n\mathbb{Z}}\right)\backslash f^{\left(n-1\right)}\left(\nicefrac{\mathbb{Z}}{n\mathbb{Z}}\right)
\end{array}}\left(x_{f\left(j\right)}x_{j}-x_{f\left(i\right)}x_{i}\right)\right\} \mod\left\{ \begin{array}{c}
\left(x_{k}\right)^{n}-1\\
k\in\nicefrac{\mathbb{Z}}{n\mathbb{Z}}
\end{array}\right\} 
\]
\end{prop}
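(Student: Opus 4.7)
The plan is to reduce non-vanishing of the LCM modulo the ideal $\{(x_k)^n-1:k\in\nicefrac{\mathbb{Z}}{n\mathbb{Z}}\}$ to the existence of a permutation $\sigma\in S_n$ realising the claimed maximum, via lattice-point evaluation on $\Omega^n$. By the canonical representative characterization established just above, a polynomial $H(\mathbf{x})$ is nonzero modulo this ideal iff it does not vanish at every point $(\omega^{g(0)},\ldots,\omega^{g(n-1)})$ with $g\in(\nicefrac{\mathbb{Z}}{n\mathbb{Z}})^{\nicefrac{\mathbb{Z}}{n\mathbb{Z}}}$. Since $\mathrm{LCM}(F,G)=\prod_{i} P_i^{\max(\alpha_i,\beta_i)}$ in the paper's multiplicative sense, the LCM is nonzero at a given lattice point iff both $F$ and $G$ are nonzero there. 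Hence the LCM is nonzero modulo the ideal iff there exists some $g$ at which the two polynomial factors in the LCM simultaneously fail to vanish.

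Next I would decode each factor's non-vanishing combinatorially. The Vandermonde-type factor $\prod_{0\le i<j<n}(x_j-x_i)$ evaluates to a nonzero number at $(\omega^{g(k)})$ iff the exponents $g(k)$ are pairwise distinct mod $n$, i.e., iff $g\in S_n$. With $r\in\nicefrac{\mathbb{Z}}{n\mathbb{Z}}$ denoting the unique fixed point of $f$ guaranteed by $|f^{(n-1)}(\nicefrac{\mathbb{Z}}{n\mathbb{Z}})|=1$, each factor of the bilinear product $\prod_{i<j,\,i,j\neq r}(x_{f(j)}x_j-x_{f(i)}x_i)$ evaluates to $\omega^{g(f(j))+g(j)}-\omega^{g(f(i))+g(i)}$, which is nonzero iff $g(f(j))+g(j)\not\equiv g(f(i))+g(i)\pmod n$; the full product is therefore nonzero iff $k\mapsto g(f(k))+g(k)$ is injective on $\nicefrac{\mathbb{Z}}{n\mathbb{Z}}\setminus\{r\}$. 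Combining these, the LCM is nonzero modulo the ideal iff there exists $\sigma:=g\in S_n$ such that $\{\sigma(f(k))+\sigma(k):k\in\nicefrac{\mathbb{Z}}{n\mathbb{Z}}\setminus\{r\}\}$ has exactly $n-1$ elements.

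Finally I would match this to the proposition's left-hand side via the substitution $i:=\sigma(k)$, which converts $\sigma(f(k))+\sigma(k)$ into $\sigma f\sigma^{-1}(i)+i$. The main obstacle I anticipate is reconciling the resulting exclusion $i\neq\sigma(r)$ produced by the substitution with the statement's exclusion $i\neq r$; since $\sigma$ ranges freely over $S_n$, this is handled by invoking the Harmonious Right and Left Invariance propositions proved earlier, together with the freedom to compose $\sigma$ with elements of $\mathrm{Aut}(G_f)$, allowing a coset-by-coset alignment of the two exclusion conventions while preserving the count of distinct additive edge labels and thereby yielding the claimed equivalence.
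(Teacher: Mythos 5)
Your proposal is correct and follows essentially the same route as the paper's own proof: reduce to evaluation over the lattice $\Omega^{n}$ via the canonical-representative characterization, and decode the non-vanishing of the Vandermonde factor as injectivity of the vertex labeling and of the bilinear factor as distinctness of the induced additive edge labels on the non-loop edges. Your closing observation about reconciling the exclusion $i\neq\sigma(r)$ with $i\neq r$ is a legitimate refinement of a point the paper glosses over, and your resolution (conjugating by a translation, in the spirit of the invariance propositions) does work.
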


\begin{proof}
The LCM in the claim of the proposition is well defined since both
polynomials
\[
\prod_{0\le i<j<n}\left(x_{j}-x_{i}\right)\:\text{ and }\:\prod_{\begin{array}{c}
0\le i<j<n\\
i,j\in\left(\nicefrac{\mathbb{Z}}{n\mathbb{Z}}\right)\backslash f^{\left(n-1\right)}\left(\nicefrac{\mathbb{Z}}{n\mathbb{Z}}\right)
\end{array}}\left(x_{f\left(j\right)}x_{j}-x_{f\left(i\right)}x_{i}\right),
\]
 split into irreducible multilinear factors. Given that we are reducing
modulo algebraic relations
\[
\left(x_{k}\right)^{n}\equiv1,\:\forall\,k\in\nicefrac{\mathbb{Z}}{n\mathbb{Z}},
\]
the canonical representative of the congruence class is completely
determined by evaluations of the dividend at lattice points taken
from $\Omega^{n}$ where 
\[
\Omega:=\left\{ \omega^{k}:k\in\nicefrac{\mathbb{Z}}{n\mathbb{Z}}\right\} 
\]
as prescribed by Eq. (\ref{Canonical representative}). This ensures
a discrete set of roots for the canonical representative of the congruence
class. On the one hand, the LCM polynomial construction vanishes when
we assign to $\mathbf{x}$ a lattice point in $\Omega^{n}$, only
if one of the irreducible multilinear factors vanishes at the chosen
evaluation point. On the other hand, one of the factor of the polynomial
construction vanishes at a lattice point only if either two distinct
vertex variables say $x_{i}$ and $x_{j}$ are assigned the same label
( we see this from the vertex Vandermonde determinant factor) or alternatively
if two distinct edges are assigned the same induced additive edge
label ( we see this from the edge Vandermonde determinant factor ).
The proof of sufficiency follows from the observation that the only
possible roots over $\Omega^{n}$ to the multivariate polynomial
\[
\text{LCM}\left\{ \prod_{0<i<j<n}\left(x_{j}-x_{i}\right),\prod_{\begin{array}{c}
0\le i<j<n\\
i,j\in\left(\nicefrac{\mathbb{Z}}{n\mathbb{Z}}\right)\backslash f^{\left(n-1\right)}\left(\nicefrac{\mathbb{Z}}{n\mathbb{Z}}\right)
\end{array}}\left(x_{f\left(j\right)}x_{j}-x_{f\left(i\right)}x_{i}\right)\right\} \mod\left\{ \begin{array}{c}
\left(x_{k}\right)^{n}-1\\
k\in\nicefrac{\mathbb{Z}}{n\mathbb{Z}}
\end{array}\right\} ,
\]
arise from vertex label assignments $\mathbf{x}\in\Omega^{n}$ in
which either distinct vertex variables are assigned the same label
or distinct edges are assigned the same induced additive edge label.
Consequently, the congruence identity
\[
0\equiv\text{LCM}\left\{ \prod_{0<i<j<n}\left(x_{j}-x_{i}\right),\prod_{\begin{array}{c}
0\le i<j<n\\
i,j\in\left(\nicefrac{\mathbb{Z}}{n\mathbb{Z}}\right)\backslash f^{\left(n-1\right)}\left(\nicefrac{\mathbb{Z}}{n\mathbb{Z}}\right)
\end{array}}\left(x_{f\left(j\right)}x_{j}-x_{f\left(i\right)}x_{i}\right)\right\} \mod\left\{ \begin{array}{c}
\left(x_{k}\right)^{n}-1\\
k\in\nicefrac{\mathbb{Z}}{n\mathbb{Z}}
\end{array}\right\} ,
\]
implies that $n-1>\underset{\sigma\in\text{S}_{n}}{\max}\left|\left\{ \sigma f\sigma^{(-1)}\left(i\right)+i:i\in\left(\nicefrac{\mathbb{Z}}{n\mathbb{Z}}\right)\backslash f^{\left(n-1\right)}\left(\nicefrac{\mathbb{Z}}{n\mathbb{Z}}\right)\right\} \right|$.
Furthermore, the proof of necessity follows from the fact that every
non-vanishing assignment to the vertex variables entries of $\mathbf{x}$
in the polynomial
\[
\text{LCM}\left\{ \prod_{0<i<j<n}\left(x_{j}-x_{i}\right),\prod_{\begin{array}{c}
0\le i<j<n\\
i,j\in\left(\nicefrac{\mathbb{Z}}{n\mathbb{Z}}\right)\backslash f^{\left(n-1\right)}\left(\nicefrac{\mathbb{Z}}{n\mathbb{Z}}\right)
\end{array}}\left(x_{f\left(j\right)}x_{j}-x_{f\left(i\right)}x_{i}\right)\right\} 
\]
describes a Harmonious labeling by Prop. (3).
\end{proof}
\begin{defn}
Let $P\left(\mathbf{x}\right)\in\mathbb{C}\left[x_{0},\cdots,x_{n-1}\right]$,
we denote by Aut$\left\{ P\left(\mathbf{x}\right)\right\} $ the stabilizer
subgroup of S$_{n}\subset\mathbb{Z}_{n}^{\mathbb{Z}_{n}}$ of $P\left(\mathbf{x}\right)$
with respect to permutation of the variable entries of $\mathbf{x}$.
\end{defn}

\begin{prop}[Stabilizer]
 For an arbitrary $f\in\mathbb{Z}_{n}^{\mathbb{Z}_{n}}$, let $P_{f}\left(\mathbf{x}\right)\in\mathbb{C}\left[x_{0},\cdots,x_{n-1}\right]$
be defined such that
\[
P_{f}\left(\mathbf{x}\right)=\prod_{0\le i\ne j<n}\left(x_{j}-x_{i}\right)\prod_{\begin{array}{c}
0\le i\ne j<n\\
i,j\in\left(\nicefrac{\mathbb{Z}}{n\mathbb{Z}}\right)\backslash f^{\left(n-1\right)}\left(\nicefrac{\mathbb{Z}}{n\mathbb{Z}}\right)
\end{array}}\left(x_{f\left(j\right)}x_{j}-x_{f\left(i\right)}x_{i}\right),
\]
then 
\[
\text{Aut}\left\{ P_{f}\left(\mathbf{x}\right)\right\} =\text{Aut}\left(G_{f}\right).
\]
\end{prop}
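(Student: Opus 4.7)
The plan is to establish the two inclusions $\text{Aut}(G_f)\subseteq\text{Aut}\{P_f(\mathbf{x})\}$ and $\text{Aut}\{P_f(\mathbf{x})\}\subseteq\text{Aut}(G_f)$ separately.

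For the forward inclusion, I would fix $\sigma\in\text{Aut}(G_f)$, so that $\sigma\circ f=f\circ\sigma$; this forces $\sigma$ to fix the unique attractive fixed point of $f$ and therefore to preserve the set $S:=\nicefrac{\mathbb{Z}}{n\mathbb{Z}}\setminus f^{(n-1)}(\nicefrac{\mathbb{Z}}{n\mathbb{Z}})$. The vertex Vandermonde factor $\prod_{0\le i\ne j<n}(x_j-x_i)$ is invariant under every element of $S_n$, since each $\sigma$ merely reorders the ordered pairs $(i,j)$. The edge factor transforms under $\sigma$ into $\prod_{i\ne j\in S}(x_{f(\sigma(j))}x_{\sigma(j)}-x_{f(\sigma(i))}x_{\sigma(i)})$ by the commutation relation, and the bijection $i\mapsto\sigma(i)$ on $S$ reindexes this back to the original product. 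Hence $\sigma\cdot P_f=P_f$.

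For the reverse inclusion I would appeal to unique factorization in $\mathbb{C}[x_0,\ldots,x_{n-1}]$. The irreducible multilinear factors of $P_f$, with multiplicity, consist of the linear forms $(x_j-x_i)$ from the vertex Vandermonde --- freely permuted by any $\sigma\in S_n$ and therefore imposing no constraint --- together with the binomials $(x_{f(j)}x_j-x_{f(i)}x_i)$, which are irreducible precisely when $\{i,f(i)\}\cap\{j,f(j)\}=\emptyset$ and otherwise split as a shared variable times a linear form. Any $\sigma\in\text{Aut}\{P_f(\mathbf{x})\}$ must permute these factors while respecting multiplicities, and a short case analysis on the two irreducibility regimes forces $\sigma$ to permute the multiset of edge monomials $\{x_{f(i)}x_i:i\in S\}$.

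The heart of the matter, and what I expect to be the main obstacle, is promoting this combinatorial constraint on the unordered pairs $\{i,f(i)\}$ to the functional identity $\sigma\circ f=f\circ\sigma$. Because $x_{f(i)}x_i=x_ix_{f(i)}$, the edge monomials alone encode only the undirected incidence pattern of $G_f$; the orientation of each edge and the location of the loop must be recovered from auxiliary structure. I would exploit the rooted-tree structure of $G_f$ --- singling out the fixed point as the unique vertex of $\nicefrac{\mathbb{Z}}{n\mathbb{Z}}$ outside $S$, then stratifying the remaining vertices by their distance to the root --- and argue inductively from the leaves inward that $\sigma$ must preserve every stratum, thereby forcing $\sigma\circ f=f\circ\sigma$ and completing the desired equality.
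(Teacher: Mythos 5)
Your forward inclusion $\text{Aut}(G_f)\subseteq\text{Aut}\{P_f(\mathbf{x})\}$ is correct and is essentially the paper's own reindexing argument. The problem is the reverse inclusion, and you have put your finger on exactly the right spot: the monomial $x_{f(i)}x_i$ is symmetric in $i$ and $f(i)$, so the edge factor of $P_f$ records only the multiset of \emph{undirected} edges of $G_f$ restricted to $S=\mathbb{Z}_n\setminus f^{(n-1)}(\mathbb{Z}_n)$. But your plan to recover the orientation by ``singling out the fixed point'' and stratifying by distance to the root cannot work, because the loop edge is excluded from the edge product (both indices range over $S$ only), so $P_f$ carries no information about which vertex is the root; a permutation stabilizing $P_f$ is under no obligation to fix the root, and the orientation of a tree toward its root is determined precisely by the choice of root.

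Indeed the claimed equality is false. Take $n=3$ and $f(0)=0$, $f(1)=0$, $f(2)=1$, the directed path $2\to 1\to 0$ with its loop at $0$, so that $S=\{1,2\}$ and $\text{Aut}(G_f)$ is trivial. The edge factor of $P_f$ is $(x_1x_2-x_0x_1)(x_0x_1-x_1x_2)=-x_1^2(x_2-x_0)^2$ and the vertex factor is symmetric, so the transposition $\sigma=(0\;2)$ stabilizes $P_f(\mathbf{x})$ even though $\sigma f\sigma^{(-1)}\neq f$ (it moves the root to $2$). Hence $\text{Aut}\{P_f(\mathbf{x})\}\supsetneq\text{Aut}(G_f)$. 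For what it is worth, the paper's own proof does not close this gap either: the inclusion $\text{Aut}\{P_f(\mathbf{x})\}\subseteq\text{Aut}(G_f)$ is disposed of there with the unsupported sentence ``It also follows that \dots $\neq P_f(\mathbf{x})$'' for $\sigma\notin\text{Aut}(G_f)$, which is exactly the step the example above refutes. So the obstacle you flagged as the heart of the matter is not a difficulty to be overcome but a genuine defect in the proposition as stated.
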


\begin{proof}
Note that for all $\gamma\in$ S$_{n}$, which fixes the loop edge
we have 
\[
\prod_{0\le i\ne j<n}\left(x_{j}-x_{i}\right)\prod_{\begin{array}{c}
0\le i\ne j<n\\
i,j\in\left(\nicefrac{\mathbb{Z}}{n\mathbb{Z}}\right)\backslash f^{\left(n-1\right)}\left(\nicefrac{\mathbb{Z}}{n\mathbb{Z}}\right)
\end{array}}\left(x_{f\left(j\right)}x_{j}-x_{f\left(i\right)}x_{i}\right)=
\]
\[
\prod_{0\le i\ne j<n}\left(x_{\gamma\left(j\right)}-x_{\gamma\left(i\right)}\right)\prod_{\begin{array}{c}
0\le i\ne j<n\\
i,j\in\left(\nicefrac{\mathbb{Z}}{n\mathbb{Z}}\right)\backslash f^{\left(n-1\right)}\left(\nicefrac{\mathbb{Z}}{n\mathbb{Z}}\right)
\end{array}}\left(x_{f\gamma\left(j\right)}x_{\gamma\left(j\right)}-x_{f\gamma\left(i\right)}x_{\gamma\left(i\right)}\right)
\]
Consequently for all $\sigma\in$ Aut$\left(G_{f}\right)$ we have
\[
\prod_{0\le i\ne j<n}\left(x_{\sigma\left(j\right)}-x_{\sigma\left(i\right)}\right)\prod_{\begin{array}{c}
0\le i\ne j<n\\
i,j\in\left(\nicefrac{\mathbb{Z}}{n\mathbb{Z}}\right)\backslash f^{\left(n-1\right)}\left(\nicefrac{\mathbb{Z}}{n\mathbb{Z}}\right)
\end{array}}\left(x_{\sigma f\left(j\right)}x_{\sigma\left(j\right)}-x_{\sigma f\left(i\right)}x_{\sigma\left(i\right)}\right)
\]
\[
=\prod_{0\le i\ne j<n}\left(x_{\sigma\sigma^{\left(-1\right)}\left(j\right)}-x_{\sigma\sigma^{\left(-1\right)}\left(i\right)}\right)\prod_{\begin{array}{c}
0\le i\ne j<n\\
i,j\in\left(\nicefrac{\mathbb{Z}}{n\mathbb{Z}}\right)\backslash f^{\left(n-1\right)}\left(\nicefrac{\mathbb{Z}}{n\mathbb{Z}}\right)
\end{array}}\left(x_{\sigma f\sigma^{\left(-1\right)}\left(j\right)}x_{\sigma\sigma^{\left(-1\right)}\left(j\right)}-x_{\sigma f\sigma^{\left(-1\right)}\left(i\right)}x_{\sigma\sigma^{\left(-1\right)}\left(i\right)}\right)=P_{f}\left(\mathbf{x}\right).
\]
It also follows that for every permutation representative $\sigma\in\left(\nicefrac{\text{S}_{n}}{\text{Aut}\left(G_{f}\right)}\right)\backslash\text{Aut}\left(G_{f}\right)$
we have
\[
\prod_{0\le i\ne j<n}\left(x_{\sigma\left(j\right)}-x_{\sigma\left(i\right)}\right)\prod_{\begin{array}{c}
0\le i\ne j<n\\
i,j\in\left(\nicefrac{\mathbb{Z}}{n\mathbb{Z}}\right)\backslash f^{\left(n-1\right)}\left(\nicefrac{\mathbb{Z}}{n\mathbb{Z}}\right)
\end{array}}\left(x_{\sigma f\left(j\right)}x_{\sigma\left(j\right)}-x_{\sigma f\left(i\right)}x_{\sigma\left(i\right)}\right)
\]
\[
=\prod_{0\le i\ne j<n}\left(x_{\sigma\sigma^{\left(-1\right)}\left(j\right)}-x_{\sigma\sigma^{\left(-1\right)}\left(i\right)}\right)\prod_{\begin{array}{c}
0\le i\ne j<n\\
i,j\in\left(\nicefrac{\mathbb{Z}}{n\mathbb{Z}}\right)\backslash f^{\left(n-1\right)}\left(\nicefrac{\mathbb{Z}}{n\mathbb{Z}}\right)
\end{array}}\left(x_{\sigma f\sigma^{\left(-1\right)}\left(j\right)}x_{\sigma\sigma^{\left(-1\right)}\left(j\right)}-x_{\sigma f\sigma^{\left(-1\right)}\left(i\right)}x_{\sigma\sigma^{\left(-1\right)}\left(i\right)}\right)\ne P_{f}\left(\mathbf{x}\right).
\]
\[
\implies\text{Aut}\left\{ P_{f}\left(\mathbf{x}\right)\right\} =\text{Aut}\left(G_{f}\right).
\]
\end{proof}

\section{The composition Lemma.}

We now state and prove our composition lemma
\begin{lem}[Composition Lemma]
 Let $f\in\left(\nicefrac{\mathbb{Z}}{n\mathbb{Z}}\right)^{\nicefrac{\mathbb{Z}}{n\mathbb{Z}}}$,
subject to $\left|f^{\left(n-1\right)}\left(\nicefrac{\mathbb{Z}}{n\mathbb{Z}}\right)\right|=1$
such that Aut$\left(G_{f}\right)\subsetneq$ Aut$\left(G_{f^{\left(2\right)}}\right)$,
then
\[
\left(n-1\right)=\underset{\sigma\in\text{S}_{n}}{\max}\left|\left\{ \sigma f^{\left(2\right)}\sigma^{(-1)}\left(i\right)+i:i\in\left(\nicefrac{\mathbb{Z}}{n\mathbb{Z}}\right)\backslash f^{\left(n-1\right)}\left(\nicefrac{\mathbb{Z}}{n\mathbb{Z}}\right)\right\} \right|\le\underset{\sigma\in\text{S}_{n}}{\max}\left|\left\{ \sigma f\sigma^{(-1)}\left(i\right)+i:i\in\left(\nicefrac{\mathbb{Z}}{n\mathbb{Z}}\right)\backslash f^{\left(n-1\right)}\left(\nicefrac{\mathbb{Z}}{n\mathbb{Z}}\right)\right\} \right|.
\]
\end{lem}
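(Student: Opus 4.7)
My overall strategy is to translate the two assertions into polynomial non-vanishing statements via the Determinantal Certificate (Prop. 5) and then exploit the structural consequences of $\text{Aut}(G_f)\subsetneq\text{Aut}(G_{f^{(2)}})$ through the Stabilizer identification (Prop. 6). The equality $(n-1)=\max_{\sigma}|\{\sigma f^{(2)}\sigma^{(-1)}(i)+i\}|$ is equivalent to the non-vanishing of $\text{LCM}(V,E_{f^{(2)}})$ modulo $\{(x_{k})^{n}-1:k\in\nicefrac{\mathbb{Z}}{n\mathbb{Z}}\}$, where $V$ denotes the full vertex Vandermonde and $E_{g}$ the edge-discriminant product of $g$; similarly the target inequality reduces to non-vanishing of $\text{LCM}(V,E_{f})$.

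For the equality, I would exploit the fact that $f^{(2)}$ collapses every depth-$1$ vertex of $G_{f}$ onto the root and, more generally, identifies any two vertices sharing an $f$-parent. Consequently, each factor $x_{f^{(2)}(j)}x_{j}-x_{f^{(2)}(i)}x_{i}$ with $f^{(2)}(i)=f^{(2)}(j)$ degenerates to the form $x_{f^{(2)}(i)}(x_{j}-x_{i})$, whose $(x_{j}-x_{i})$ part is absorbed into $V$ under the LCM; only a pure monomial $M(\mathbf{x})$ in the high-ancestor variables together with a residual product $R(\mathbf{x})$ of \emph{genuine} bivariate discriminants (those indexed by $i,j$ with $f^{(2)}(i)\ne f^{(2)}(j)$) survives. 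Writing $\text{LCM}(V,E_{f^{(2)}})=V\cdot M(\mathbf{x})\cdot R(\mathbf{x})$, I would then certify non-vanishing modulo the algebraic relations by an explicit lattice-point evaluation in $\Omega^{n}$: Prop. 6 ensures that every $\tau\in\text{Aut}(G_{f^{(2)}})\setminus\text{Aut}(G_{f})$ is a polynomial symmetry of $P_{f^{(2)}}$ and hence of the simplified LCM, so the search for a witness bijection $g:\nicefrac{\mathbb{Z}}{n\mathbb{Z}}\to\nicefrac{\mathbb{Z}}{n\mathbb{Z}}$ reduces to an instance of the Harmonious Expansion (Prop. 1) applied to the \emph{contracted} functional graph obtained from $G_{f}$ by a $\tau$-orbit identification.

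For the chain $\max$ for $f^{(2)}\le\max$ for $f$, I would leverage the non-trivial coset $\text{Aut}(G_{f^{(2)}})/\text{Aut}(G_{f})$ together with the additive-shift invariances of Prop. 3 and Prop. 4 to transport any witness labeling for $f^{(2)}$ into a witness labeling for $f$ with at least the same number of distinct induced sums. The coset representative furnishes exactly the flexibility needed to decouple the two $f$-steps hidden inside each $f^{(2)}$-step: conjugating by any lift of a non-trivial coset element leaves $f^{(2)}$ invariant while genuinely perturbing $f$, and a combination with an additive shift from Prop. 3--4 realigns the induced $f$-sums into a bijective image.

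The principal obstacle I anticipate is controlling the residual factor $R(\mathbf{x})$ precisely enough to keep the lattice-point evaluation non-zero. The depth of $G_{f}$ dictates how many non-trivial bivariate discriminants survive in $R$, and for deep trees $R$ may itself possess vanishing points in $\Omega^{n}$ that must be excluded using the $\tau$-symmetry. Navigating this combinatorial bookkeeping while remaining within the mod-$\{(x_{k})^{n}-1\}$ reduction is where I expect the bulk of the technical effort to reside.
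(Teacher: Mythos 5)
Your setup matches the paper's framework in outline: both you and the authors reduce the assertions to non-vanishing of the LCM certificate modulo $\left\{ \left(x_{k}\right)^{n}-1\,:\,k\in\nicefrac{\mathbb{Z}}{n\mathbb{Z}}\right\}$ via the Determinantal Certificate, and both invoke the Stabilizer proposition to identify $\text{Aut}\left\{ P_{f}\left(\mathbf{x}\right)\right\}$ with $\text{Aut}\left(G_{f}\right)$. But the two steps that actually carry the lemma are absent from your proposal. First, the paper does not prove the equality $(n-1)=\max_{\sigma}\left|\left\{ \sigma f^{(2)}\sigma^{(-1)}(i)+i\right\} \right|$ inside the lemma at all: its proof opens with ``By our premise'' and treats that equality as given, to be discharged by the downward induction in the Harmonious Labeling Theorem (where $f^{(2^{\lceil\log_{2}(n-1)\rceil})}$ is identically constant). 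Your attempt to establish it from scratch by factoring the certificate as $V\cdot M(\mathbf{x})\cdot R(\mathbf{x})$ stalls exactly where you say it does: the genuinely bivariate factors indexed by pairs with $f^{(2)}(i)\ne f^{(2)}(j)$ are not controlled by any argument you give, and no witness evaluation in $\Omega^{n}$ is produced.

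Second, and more seriously, your transfer from $f^{(2)}$ to $f$ has no mechanism. You propose to conjugate a witness labeling for $f^{(2)}$ by a lift of a non-trivial element of $\text{Aut}\left(G_{f^{(2)}}\right)/\text{Aut}\left(G_{f}\right)$ and then ``realign'' with the shifts of Props.~3--4. But those propositions only state that the affine relabelings $g\mapsto g\circ(\mathrm{id}+c)$ and $g\mapsto g+c$ \emph{preserve} the count of distinct induced sums; they cannot increase it, and conjugating $f$ by an automorphism of $G_{f^{(2)}}$ gives no reason for the induced $f$-sums to become pairwise distinct --- that is precisely the statement to be proved, restated. The paper's engine is entirely different: the telescoping identity $x_{f(j)}x_{j}-x_{f(i)}x_{i}=\left(x_{f^{(2)}(j)}x_{j}-x_{f^{(2)}(i)}x_{i}\right)+\left(\left(x_{f(j)}-x_{f^{(2)}(j)}\right)x_{j}-\left(x_{f(i)}-x_{f^{(2)}(i)}\right)x_{i}\right)$, which expands $P_{f}$ as $P_{f^{(2)}}$ plus a sum of cross terms indexed by $k_{ij}\in\{0,1\}$; one then sums $P_{\sigma f\sigma^{(-1)}}$ over a transversal of $\text{S}_{n}/\text{Aut}\left(G_{f^{(2)}}\right)$ and reduces modulo the power-sum relations $p_{k}(\mathbf{x})=0$ for $0<k<n$ and $p_{n}(\mathbf{x})=n$, so that the $f^{(2)}$-part symmetrizes to a non-zero constant while the strict inclusion $\text{Aut}\left(G_{f}\right)\subsetneq\text{Aut}\left(G_{f^{(2)}}\right)$ is used to argue that the cross-term sum is not symmetric and hence cannot cancel that constant. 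Nothing in your proposal plays the role of this decomposition or of the orbit summation, so the proposal does not prove the lemma.
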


\begin{proof}
We show that 
\[
\text{LCM}\left\{ \prod_{0\le i\ne j<n}\left(x_{j}-x_{i}\right),\prod_{\begin{array}{c}
0\le i\ne j<n\\
i,j\in\left(\nicefrac{\mathbb{Z}}{n\mathbb{Z}}\right)\backslash f^{\left(n-1\right)}\left(\nicefrac{\mathbb{Z}}{n\mathbb{Z}}\right)
\end{array}}\left(x_{f^{\left(2\right)}\left(j\right)}x_{j}-x_{f^{\left(2\right)}\left(i\right)}x_{i}\right)\right\} \not\equiv0\mod\left\{ \begin{array}{c}
\left(x_{k}\right)^{n}-1\\
k\in\nicefrac{\mathbb{Z}}{n\mathbb{Z}}
\end{array}\right\} ,
\]
implies that 
\[
\text{LCM}\left\{ \prod_{0\le i\ne j<n}\left(x_{j}-x_{i}\right),\prod_{\begin{array}{c}
0\le i\ne j<n\\
i,j\in\left(\nicefrac{\mathbb{Z}}{n\mathbb{Z}}\right)\backslash f^{\left(n-1\right)}\left(\nicefrac{\mathbb{Z}}{n\mathbb{Z}}\right)
\end{array}}\left(x_{f\left(j\right)}x_{j}-x_{f\left(i\right)}x_{i}\right)\right\} \not\equiv0\mod\left\{ \begin{array}{c}
\left(x_{k}\right)^{n}-1\\
k\in\nicefrac{\mathbb{Z}}{n\mathbb{Z}}
\end{array}\right\} .
\]
For this purpose, we associate with an arbitrary$g\in\left(\nicefrac{\mathbb{Z}}{n\mathbb{Z}}\right)^{\nicefrac{\mathbb{Z}}{n\mathbb{Z}}}$,
subject to $\left|g^{\left(n-1\right)}\left(\nicefrac{\mathbb{Z}}{n\mathbb{Z}}\right)\right|=1$,
a multivariate polynomial $P_{g}\left(\mathbf{x}\right)\in\mathbb{Q}\left[x_{0},\cdots,x_{n-1}\right]$
given by 
\[
P_{g}\left(\mathbf{x}\right)=\prod_{0\le i\ne j<n}\left(x_{j}-x_{i}\right)\prod_{\begin{array}{c}
0\le i\ne j<n\\
i,j\in\left(\nicefrac{\mathbb{Z}}{n\mathbb{Z}}\right)\backslash g^{\left(n-1\right)}\left(\nicefrac{\mathbb{Z}}{n\mathbb{Z}}\right)
\end{array}}\left(x_{g\left(j\right)}x_{j}-x_{g\left(i\right)}x_{i}\right).
\]
We prove the claim by contradiction. By our premise,
\[
n-1=\underset{\sigma\in\text{S}_{n}}{\max}\left|\left\{ \sigma f^{\left(2\right)}\sigma^{(-1)}\left(i\right)+i:i\in\left(\nicefrac{\mathbb{Z}}{n\mathbb{Z}}\right)\backslash f^{\left(n-1\right)}\left(\nicefrac{\mathbb{Z}}{n\mathbb{Z}}\right)\right\} \right|\;\text{ and }\;\text{Aut}\left(G_{f}\right)\subsetneq\text{Aut}\left(G_{f^{\left(2\right)}}\right).
\]
By definition, 
\[
P_{f}\left(\mathbf{x}\right)=\prod_{0\le i\ne j<n}\left(x_{j}-x_{i}\right)\prod_{\begin{array}{c}
0\le i\ne j<n\\
i,j\in\left(\nicefrac{\mathbb{Z}}{n\mathbb{Z}}\right)\backslash f^{\left(n-1\right)}\left(\nicefrac{\mathbb{Z}}{n\mathbb{Z}}\right)
\end{array}}\left(x_{f\left(j\right)}x_{j}-x_{f\left(i\right)}x_{i}\right).
\]
By telescoping we have
\[
\begin{array}{ccc}
P_{f}\left(\mathbf{x}\right) & = & \underset{0\le i\ne j<n}{\prod}\left(x_{j}-x_{i}\right)\underset{\begin{array}{c}
0\le i\ne j<n\\
i,j\in\left(\nicefrac{\mathbb{Z}}{n\mathbb{Z}}\right)\backslash f^{\left(n-1\right)}\left(\nicefrac{\mathbb{Z}}{n\mathbb{Z}}\right)
\end{array}}{\prod}\bigg(\left(x_{f\left(j\right)}-x_{f^{\left(2\right)}\left(j\right)}+x_{f^{\left(2\right)}\left(j\right)}\right)x_{j}+\\
 &  & \left(-1\right)\left(x_{f\left(i\right)}-x_{f^{\left(2\right)}\left(j\right)}+x_{f^{\left(2\right)}\left(j\right)}\right)x_{i}\bigg)\\
\\
 & = & \underset{0\le i\ne j<n}{\prod}\left(x_{j}-x_{i}\right)\underset{\begin{array}{c}
0\le i\ne j<n\\
i,j\in\left(\nicefrac{\mathbb{Z}}{n\mathbb{Z}}\right)\backslash f^{\left(n-1\right)}\left(\nicefrac{\mathbb{Z}}{n\mathbb{Z}}\right)
\end{array}}{\prod}\bigg(x_{f^{\left(2\right)}\left(j\right)}x_{j}-x_{f^{\left(2\right)}\left(i\right)}x_{i}+\\
 &  & \left(x_{f\left(j\right)}-x_{f^{\left(2\right)}\left(j\right)}\right)x_{j}-\left(x_{f\left(i\right)}-x_{f^{\left(2\right)}\left(i\right)}\right)x_{i}\bigg)\\
\\
P_{f}\left(\mathbf{x}\right) & = & P_{f^{\left(2\right)}}\left(\mathbf{x}\right)+\underset{0\le i\ne j<n}{\prod}\left(x_{j}-x_{i}\right)\underset{\begin{array}{c}
k_{ij}\in\left\{ 0,1\right\} \\
0=\underset{i\ne j}{\prod}k_{ij}
\end{array}}{\sum}\underset{\begin{array}{c}
0\le i\ne j<n\\
i,j\in\left(\nicefrac{\mathbb{Z}}{n\mathbb{Z}}\right)\backslash f^{\left(n-1\right)}\left(\nicefrac{\mathbb{Z}}{n\mathbb{Z}}\right)
\end{array}}{\prod}\left(x_{f^{\left(2\right)}\left(j\right)}x_{j}-x_{f^{\left(2\right)}\left(i\right)}x_{i}\right)^{k_{ij}}\times\\
 &  & \left(\left(x_{f\left(j\right)}-x_{f^{\left(2\right)}\left(j\right)}\right)x_{j}-\left(x_{f\left(i\right)}-x_{f^{\left(2\right)}\left(i\right)}\right)x_{i}\right)^{1-k_{ij}}
\end{array}
\]
Let $g\in\left(\nicefrac{\mathbb{Z}}{n\mathbb{Z}}\right)^{\left(\nicefrac{\mathbb{Z}}{n\mathbb{Z}}\right)}$
be subject to $\left|g^{\left(n-1\right)}\left(\nicefrac{\mathbb{Z}}{n\mathbb{Z}}\right)\right|=1$
and let 
\[
\kappa_{g}\,:=\left\{ G_{\sigma g\sigma^{\left(-1\right)}}:\sigma\in\nicefrac{\text{S}_{n}}{\text{Aut}\left(G_{g}\right)}\text{ and }n-1=\left|\left\{ \sigma g\sigma^{\left(-1\right)}\left(j\right)+j:j\in\left(\nicefrac{\mathbb{Z}}{n\mathbb{Z}}\right)\backslash g^{\left(n-1\right)}\left(\nicefrac{\mathbb{Z}}{n\mathbb{Z}}\right)\right\} \right|\right\} ,
\]

\noindent
We now will analyze the residue of $\underset{\sigma\in\nicefrac{\text{S}_{n}}{\text{Aut}\left(G_{f}^{(2)}\right)}}{\sum}P_{\sigma f\sigma^{(-1)}}\left(\mathbf{x}\right)$
over a collection of moduli. Specifically, we'd like to pick a set
of relations that are identically constant for symmetric polynomials
and correspond to setting the $x_{i}$'s to pairwise distinct roots
of unity. We pick as our basis the power sum polynomials 
\begin{align*}
p_{k}\left(\mathbf{x}\right)=p_{k}(x_{0},x_{1},\ldots,x_{n-1})=\sum_{i\in\mathbb{Z}_{n}}\left(x_{i}\right)^{k}\equiv\begin{cases}
0 & \text{ if }0\leq k\leq n-1\\
n & \text{ if }k=n
\end{cases}
\end{align*}
This gives us the pairwise distinct substitution of roots of unity
for $x_{i}$'s. To see this, recall the elementary symmetric function
basis is defined 
\begin{align*}
\prod_{i\in\mathbb{Z}_{n}}\left(\lambda-x_{i}\right) & =\sum_{0\le k\le n}e_{k}\left(\mathbf{x}\right)\lambda^{n-k}
\end{align*}
The Newton-Girard formulae state that 
\begin{align*}
e_{k}(x) & =\frac{1}{k}\sum_{0\le\ell\le k}(-1)^{\ell-1}e_{k-\ell}(x)p_{i}(x)
\end{align*}
As a result of our moduli, we have that 
\begin{align*}
e_{k}(\mathbf{x})=\begin{cases}
 & 0\text{ if }0<k<n\\
 & (-1)^{n-1}=-1\text{ if }k=n\text{ (recall that \ensuremath{n} is even)}
\end{cases}
\end{align*}
As a result, the residue of our moduli is equivalent to substituting
in the roots of $\lambda^{n}-1$. Indeed, 
\begin{align*}
\prod_{i\in\mathbb{Z}_{n}}\left(\lambda-x_{i}\right) & =\sum_{0\le k\le n}e_{k}\left(\mathbf{x}\right)\,\lambda^{n-k}\\
 & =\lambda^{n}-1
\end{align*}

Now summing over a conjugation orbit of arbitrarily chosen coset
representatives, (where we are careful to select only one coset representative
per left coset of Aut$\left(G_{f^{\left(2\right)}}\right)$), yields
the equality for some non-zero constant $C_{f^{\left(2\right)}}$
\[
\left(\sum_{\sigma\in\nicefrac{\text{S}_{n}}{\text{Aut}\left(G_{f^{\left(2\right)}}\right)}}P_{\sigma f\sigma^{\left(-1\right)}}\left(\mathbf{x}\right)\right)\equiv C_{f^{\left(2\right)}}\left(-1\right)^{{n \choose 2}}n^{n}+
\]
\[
\left(-1\right)^{{n \choose 2}}n^{n}\sum_{\sigma\in\nicefrac{\text{S}_{n}}{\text{Aut}\left(G_{f^{\left(2\right)}}\right)}}\underset{\begin{array}{c}
k_{ij}\in\left\{ 0,1\right\} \\
0=\underset{i\ne j}{\prod}k_{ij}
\end{array}}{\sum}\underset{\begin{array}{c}
0\le i\ne j<n\\
i,j\in\left(\nicefrac{\mathbb{Z}}{n\mathbb{Z}}\right)\backslash f^{\left(n-1\right)}\left(\nicefrac{\mathbb{Z}}{n\mathbb{Z}}\right)
\end{array}}{\prod}\left(x_{\sigma f^{\left(2\right)}\sigma^{\left(-1\right)}\left(j\right)}x_{j}-x_{\sigma f^{\left(2\right)}\sigma^{\left(-1\right)}\left(i\right)}x_{i}\right)^{k_{ij}}\times
\]
\[
\left(\left(x_{f\left(j\right)}-x_{f^{\left(2\right)}\left(j\right)}\right)x_{j}-\left(x_{f\left(i\right)}-x_{f^{\left(2\right)}\left(i\right)}\right)x_{i}\right)^{1-k_{ij}}\mod\left\{ \begin{array}{c}
\underset{i\in\nicefrac{\mathbb{Z}}{n\mathbb{Z}}}{\sum}\left(x_{i}\right)^{k}\\
0<k<n\\
\underset{i\in\nicefrac{\mathbb{Z}}{n\mathbb{Z}}}{\sum}\left(x_{i}\right)^{n}-n
\end{array}\right\} .
\]
Since $\text{Aut}\left(G_{f}\right)\subsetneq\text{Aut}\left(G_{f^{\left(2\right)}}\right)$,
it follows that the polynomial
\[
\sum_{\sigma\in\nicefrac{\text{S}_{n}}{\text{Aut}\left(G_{f^{\left(2\right)}}\right)}}\underset{\begin{array}{c}
k_{ij}\in\left\{ 0,1\right\} \\
0=\underset{i\ne j}{\prod}k_{ij}
\end{array}}{\sum}\underset{\begin{array}{c}
0\le i\ne j<n\\
i,j\in\left(\nicefrac{\mathbb{Z}}{n\mathbb{Z}}\right)\backslash f^{\left(n-1\right)}\left(\nicefrac{\mathbb{Z}}{n\mathbb{Z}}\right)
\end{array}}{\prod}\left(x_{\sigma f^{\left(2\right)}\sigma^{\left(-1\right)}\left(j\right)}x_{j}-x_{\sigma f^{\left(2\right)}\sigma^{\left(-1\right)}\left(i\right)}x_{i}\right)^{k_{ij}}\times
\]
\[
\left(\left(x_{f\left(j\right)}-x_{f^{\left(2\right)}\left(j\right)}\right)x_{j}-\left(x_{f\left(i\right)}-x_{f^{\left(2\right)}\left(i\right)}\right)x_{i}\right)^{1-k_{ij}}.
\]
does not lie in the ring of symmetric polynomial in the entries of
$\mathbf{x}$. Consequently there must be at least one evaluation
point $\mathbf{x}$ subject to the constraints
\[
\left\{ \begin{array}{ccc}
0 & = & \underset{i\in\nicefrac{\mathbb{Z}}{n\mathbb{Z}}}{\sum}\left(x_{i}\right)^{k}\\
 & \text{where} & 0<k<n\\
n & = & \underset{i\in\nicefrac{\mathbb{Z}}{n\mathbb{Z}}}{\sum}\left(x_{i}\right)^{n}
\end{array}\right\} ,
\]
 for which the evaluation of the said polynomial depend on the choice
of coset representatives. For otherwise, evaluations of the said polynomial
would be independent of choices of coset representatives. In which
case the polynomial obtained by summing over the conjugation orbit
of coset representatives would be symmetric and thereby contradict
the premise $\text{Aut}\left(G_{f}\right)\ne\text{Aut}\left(G_{f^{\left(2\right)}}\right)$.
We conclude that
\[
P_{f}\left(\mathbf{x}\right)\not\equiv0\mod\left\{ \begin{array}{c}
\left(x_{k}\right)^{n}-1\\
k\in\nicefrac{\mathbb{Z}}{n\mathbb{Z}}
\end{array}\right\} .
\]
\end{proof}
Note that the premise Aut$\left(G_{f}\right)\subsetneq$ Aut$\left(G_{f^{\left(2\right)}}\right)$
incurs no loss of generality when $n>2$. For we see that if $f$
is not identically constant and Aut$\left(G_{f}\right)=$ Aut$\left(G_{f^{\left(2\right)}}\right)$
then there exists $k\in\nicefrac{\mathbb{Z}}{n\mathbb{Z}}$ such that
Aut$\left(G_{\text{S}\left(f,k\right)}\right)\subsetneq$ Aut$\left(G_{\text{S}\left(f,k\right)^{\left(2\right)}}\right)$.
For instance, take $k$ to be a vertex at edge distance $2$ from
a leaf node. Crucially, for all $k\in\nicefrac{\mathbb{Z}}{n\mathbb{Z}}$
\[
\underset{\sigma\in\text{S}_{n}}{\max}\left|\left\{ \sigma\text{S}\left(f,k\right)\sigma^{(-1)}\left(i\right)+i:i\in\left(\nicefrac{\mathbb{Z}}{n\mathbb{Z}}\right)\backslash\text{S}\left(f,k\right)^{\left(n-1\right)}\left(\nicefrac{\mathbb{Z}}{n\mathbb{Z}}\right)\right\} \right|=\underset{\sigma\in\text{S}_{n}}{\max}\left|\left\{ \sigma f\sigma^{(-1)}\left(i\right)+i:i\in\left(\nicefrac{\mathbb{Z}}{n\mathbb{Z}}\right)\backslash f^{\left(n-1\right)}\left(\nicefrac{\mathbb{Z}}{n\mathbb{Z}}\right)\right\} \right|.
\]

\section{The Harmonious Labeling Theorem}

Equipped with the composition lemma, we settle in the affirmative
the Graham--Sloane conjecture.
\begin{thm}
For all $f\in\left(\nicefrac{\mathbb{Z}}{n\mathbb{Z}}\right)^{\left(\nicefrac{\mathbb{Z}}{n\mathbb{Z}}\right)}$
subject to $\left|f^{\left(n-1\right)}\left(\nicefrac{\mathbb{Z}}{n\mathbb{Z}}\right)\right|=1$,
there exist $k\in\nicefrac{\mathbb{Z}}{n\mathbb{Z}}$
\[
n=\max_{\sigma\in\text{S}_{n}}\left|\left\{ \sigma\text{S}\left(f,k\right)\sigma^{(-1)}\left(i\right)+i:i\in\nicefrac{\mathbb{Z}}{n\mathbb{Z}}\right\} \right|.
\]
\end{thm}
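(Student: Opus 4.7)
The plan is a two-step approach. First, I reduce the theorem to establishing a near-harmonious labeling in which every non-loop edge receives a distinct additive label; then I invoke Proposition 2 (Swap Sink Harmony), which requires $n$ odd and converts such a near-harmonious labeling into a fully harmonious one by relocating the loop to the unique vertex $x$ solving $2x = l$, where $l$ is the missing residue class.

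Concretely, it suffices to prove
$$n - 1 \;=\; \max_{\sigma \in \text{S}_n} \Bigl|\bigl\{\sigma f \sigma^{(-1)}(i) + i : i \in (\nicefrac{\mathbb{Z}}{n\mathbb{Z}}) \setminus f^{(n-1)}(\nicefrac{\mathbb{Z}}{n\mathbb{Z}})\bigr\}\Bigr|, \qquad (\star)$$
since once $(\star)$ is known, Proposition 2 produces the required $k$ and the theorem follows immediately.

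I will prove $(\star)$ by strong induction on the integer $\ell(f) := \min\{k \ge 0 : f^{(2^k)} \text{ is constant}\}$, which is well-defined by the hypothesis $|f^{(n-1)}(\nicefrac{\mathbb{Z}}{n\mathbb{Z}})| = 1$. For the base case $\ell(f) = 0$, the map $f$ is the constant function at the fixed point, the underlying tree is a star, and $(\star)$ follows by inspection: $\{f(i) + i : i \ne \text{fixed point}\}$ is a translate of the complement of the fixed point, hence has cardinality $n-1$. For the inductive step, $\ell(f^{(2)}) = \ell(f) - 1$, so the induction hypothesis gives $(\star)$ for $f^{(2)}$. If $\text{Aut}(G_f) \subsetneq \text{Aut}(G_{f^{(2)}})$, the Composition Lemma then transfers $(\star)$ directly from $f^{(2)}$ to $f$. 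If instead $\text{Aut}(G_f) = \text{Aut}(G_{f^{(2)}})$, the remark following the Composition Lemma supplies a $k$ (for instance, a vertex at edge-distance $2$ from a leaf) such that $g := S(f,k)$ satisfies $\text{Aut}(G_g) \subsetneq \text{Aut}(G_{g^{(2)}})$, and simultaneously ensures that the quantity appearing in $(\star)$ is unchanged under the replacement $f \to g$.

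The main technical obstacle is ensuring the induction closes in this second case: applying the Composition Lemma to $g$ requires $(\star)$ for $g^{(2)}$, not merely for $f^{(2)}$. To handle this, I would strengthen the induction to run on an invariant of the underlying unrooted tree (for instance its diameter), justified by the rooting-invariance of $(\star)$ asserted in the remark, and verify that a centroid-rooting makes the diameter of the grandparent-tree associated to $g^{(2)}$ strictly smaller than that of the tree underlying $f$. Tracking this strict decrease through the corrective swap-sink, so that the inductive hypothesis applies uniformly to every auxiliary rooted function appearing in the argument, is the delicate point and the crux of the proof.
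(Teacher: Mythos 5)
Your strategy is the same as the paper's: reduce the theorem via Proposition 2 (Swap Sink Harmony) to the claim $(\star)$ that some relabeling makes all $n-1$ non-loop additive edge labels distinct, then obtain $(\star)$ by descending from the identically constant iterate $f^{\left(2^{\left\lceil \log_{2}(n-1)\right\rceil }\right)}$ back to $f$ by way of the Composition Lemma. (As you note, this reduction needs $n$ odd, a hypothesis present in the introduction but absent from the theorem's statement.) The difference is that you make the descent explicit as an induction on $\ell(f)$ and, in doing so, expose its weak point.

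That weak point is a genuine gap, and your proposal does not close it. The Composition Lemma transfers $(\star)$ from $f^{(2)}$ to $f$ only when $\text{Aut}\left(G_{f}\right)\subsetneq\text{Aut}\left(G_{f^{(2)}}\right)$. When equality holds at some level of the chain, the corrective replacement $f\mapsto g=\text{S}(f,k)$ yields a function whose square $g^{(2)}$ is not $f^{(2)}$ and in general appears nowhere in the chain $f,f^{(2)},f^{(4)},\dots$, so the inductive hypothesis indexed by $\ell$ says nothing about $g^{(2)}$ and the induction does not close --- exactly as you observe. Your proposed repair, re-indexing the induction by the diameter of the unrooted tree, is only a plan, and it faces two concrete obstructions. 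First, the choice of $k$ is overdetermined: the condition $\text{Aut}\left(G_{g}\right)\subsetneq\text{Aut}\left(G_{g^{(2)}}\right)$ is arranged by taking $k$ at edge distance $2$ from a leaf, while the diameter decrease calls for a centroid rooting, and these need not be compatible. Second, the decrease can fail even in the ``good'' case where no re-rooting is performed: for a path on five vertices rooted at an endpoint one has $\text{Aut}\left(G_{f}\right)\subsetneq\text{Aut}\left(G_{f^{(2)}}\right)$, yet the tree underlying $f^{(2)}$ is a spider with two legs of length two, whose diameter is four, the same as that of the original path; so the inductive hypothesis on diameter would not apply to $f^{(2)}$ either. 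You should know that the paper's own proof consists of the single sentence that the claim ``follows by repeatedly applying the composition lemma'' and is silent on precisely this issue, so you have not missed an argument the paper supplies; but this is where the entire burden of the proof lies, and neither your proposal nor the paper discharges it.
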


\begin{proof}
It suffices to show that for all $f$ subject to $\left|f^{\left(n-1\right)}\left(\nicefrac{\mathbb{Z}}{n\mathbb{Z}}\right)\right|=1$
we have 
\[
n-1=\underset{\sigma\in\text{S}_{n}}{\max}\left|\left\{ \sigma f\sigma^{(-1)}\left(i\right)+i:i\in\left(\nicefrac{\mathbb{Z}}{n\mathbb{Z}}\right)\backslash f^{\left(n-1\right)}\left(\nicefrac{\mathbb{Z}}{n\mathbb{Z}}\right)\right\} \right|
\]
This latter claim follows by repeatedly applying the composition lemma.
For we know that for any such function $f$ the iterate $f^{\left(2^{\left\lceil \log_{2}\left(n-1\right)\right\rceil }\right)}$
is necessarily identically constant. Recall that the graph of identically constant
    functions in $\left(\mathbb{Z}_n\right)^{\mathbb{Z}_n}$ are all harmoniously labeled.
\end{proof}
\bibliographystyle{amsalpha}
\bibliography{A_Proof_of_the_GS_Conjecture}

\end{document}